\theoremstyle{plain}
\newtheorem{thm}{Theorem}[section]
\newtheorem{prop}[thm]{Proposition}
\newtheorem{claim}[thm]{Claim}
\newtheorem{qu}{General Problem}
\theoremstyle{definition}
\theoremstyle{remark}
\newcommand{\nc}{\newcommand}
\nc{\dmo}{\DeclareMathOperator}
\DeclareMathOperator{\Diff}{Diff}
\DeclareMathOperator{\FF}{\mathbb{F}}
\DeclareMathOperator{\Inv}{Inv}
\renewcommand{\epsilon}{\varepsilon}
\renewcommand{\tilde}{\tilde}
\nc{\coloneq}{\mathrel{\mathop:}\mkern-1.2mu=}
\nc{\margin}[1]{\marginpar{\scriptsize #1}}
\nc{\red}[1]{\textcolor{red}{#1}}
\nc{\para}[1]{\medskip\noindent\textbf{#1.}}
\title{Actions of $2$-groups of bounded exponent on manifolds}
\author{Lei Chen} 
\email{chenlei@caltech.edu}
\date{Aug 27,  2019}
\begin{document}
\maketitle
\begin{abstract}
In this paper, we show that an infinite 2-group of bounded exponent cannot act faithfully and smoothly on compact manifolds. 
\end{abstract}
\section{Introduction}
We call a group $G$ a 2-group if the order of every element is a power of $2$. We say that $G$ has \emph{bounded exponent} if there is a uniform bound on the orders of elements in $G$. In this paper, we study the action of $2$-groups of bounded exponent on compact manifolds. The main result of this paper is the following:
\begin{thm}\label{main}
If a 2-group of bounded exponent $G$ acts faithfully and smoothly on a compact manifold $M$, then $G$ is a finite group.  
\end{thm}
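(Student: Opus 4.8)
The plan is to embed $G$ into a compact Lie group, after which finiteness is immediate: a compact Lie group of bounded exponent is finite, since any positive-dimensional compact Lie group contains a circle subgroup and hence elements of every order. First I would reduce to $M$ connected. As $M$ has finitely many components, the subgroup $G_0\le G$ preserving each component has finite index, so $G$ is finite iff $G_0$ is; and $G_0$ embeds into the product $\prod_i\Diff(M_i)$ over the (finitely many) components. Applying the connected case of the theorem to each factor, the image of $G_0$ in $\Diff(M_i)$ is a bounded-exponent $2$-group acting faithfully on the connected $M_i$, hence finite, so $G_0$ embeds in a finite product of finite groups and is finite. Two classical inputs then drive everything: by Bochner's linearization theorem a finite-order diffeomorphism fixing a point $x$ is smoothly conjugate near $x$ to the linear map $d_x g$, so $g$ is trivial near $x$ exactly when $d_x g=\mathrm{id}$; and for a \emph{finite} subgroup $H\le G$, averaging any metric over $H$ gives an $H$-invariant metric, whence $H\hookrightarrow \mathrm{Isom}(M,g_H)$, a compact Lie group by Myers--Steenrod.

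Next I would establish a uniform bound $C=C(M,2^N)$ with $|H|\le C$ for every finite subgroup $H\le G$, where $2^N$ is the exponent and $n=\dim M$. Indeed $H\le \mathrm{Isom}(M,g_H)\hookrightarrow U(m)$ for some $m$ bounded in terms of $n$ (the isometry group has dimension at most $\binom{n+1}{2}$ and admits a faithful unitary representation). By Jordan's theorem, $H$ has an abelian subgroup $A$ of index bounded in terms of $m$; and an abelian subgroup of exponent dividing $2^N$ in $U(m)$ is simultaneously diagonalizable with entries $2^N$-th roots of unity, so $|A|\le (2^N)^m$. Combining these bounds yields the claim.

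The heart of the matter is upgrading this to finiteness of $G$ itself, and this step is genuinely delicate: $G$ is a torsion group of bounded exponent but need \emph{not} be locally finite, so a uniform bound on its finite subgroups does not by itself force finiteness (compare the infinite finitely generated $2$-groups produced by Burnside-type constructions, all of whose proper subgroups are severely constrained). To get a global conclusion I would run an induction on $\dim M$ whose backbone is the compact-Lie-group lemma above. For a nontrivial $g\in G$ of order $2^k$ the element $h=g^{2^{k-1}}$ is a nontrivial involution; by Bochner, $\mathrm{Fix}(h)$ is a closed submanifold, and since $M$ is connected and $h\neq 1$ every component has dimension at most $n-1$. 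The centralizer $C_G(h)$ preserves $\mathrm{Fix}(h)$ and acts on this strictly lower-dimensional manifold, which is where I hope to apply the inductive hypothesis, reassembling finiteness of $G$ from finiteness of the pieces attached to a well-chosen supply of involutions.

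The main obstacle is controlling this induction for a group that may fail to be locally finite, and it is exactly here that the bounded-exponent hypothesis must be used in an essential way, since finite order alone does not bound derivatives: conjugating a finite-order isometry by a wild diffeomorphism keeps the order fixed while making $\|dg\|$ arbitrarily large, so $G$ need not preserve any metric a priori. To prevent the local behaviour from degenerating I would bring in Newman's theorem, which supplies a constant $\epsilon>0$ depending only on $(M,g)$ such that every nontrivial finite subgroup has an orbit of diameter at least $\epsilon$; this keeps the germs of group elements, and hence the fixed-point submanifolds $\mathrm{Fix}(h)$, from collapsing. The delicate points I expect to consume the real work are tracking the centralizers $C_G(h)$, handling the possible non-faithfulness of the induced actions on $\mathrm{Fix}(h)$, and ultimately forcing the whole of $G$ — not merely its finite subgroups — into a single compact Lie group so that bounded exponent can deliver finiteness.
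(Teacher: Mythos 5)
There is a genuine gap, and it sits exactly where you say you ``hope'' and ``expect'' the real work to be: the proposal is a plan whose core step is not carried out, and the plan as stated cannot be completed because it has no mechanism for handling \emph{free} actions. Your dimensional induction runs by passing to $\mathrm{Fix}(h)$ for an involution $h$, but if $h$ (or all of $G$, or all the involutions with infinite centralizer) acts without fixed points, then $\mathrm{Fix}(h)=\emptyset$ and the induction never starts. The paper shows this is not a corner case but the heart of the theorem: using the inductive hypothesis on dimension together with a germ-group rigidity statement (Reeb stability at a fixed point forces the linearization to be faithful up to finite index, and then Burnside's theorem applies to the image in $GL(n)$), one proves that every involution with \emph{infinite} centralizer must act freely. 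So the fixed-point route collapses precisely for the involutions you need, and one is forced to prove separately that an infinite bounded-exponent $2$-group cannot act freely on a compact manifold. That step requires a genuinely different, cohomological input --- that $(\FF_p)^k$ cannot act freely on a compact $M$ for $k$ large (a Leray/Smith-theory spectral sequence argument) --- combined with an induction on the exponent via the subgroups generated by squares. Nothing in your proposal substitutes for this; Newman's theorem, Bochner linearization, and the invariant-metric/Jordan bound on finite subgroups all concern actions near fixed points or finite subgroups, and you yourself correctly observe that bounding finite subgroups does not bound $G$ when $G$ need not be locally finite. The final sentence, ``ultimately forcing the whole of $G$ into a single compact Lie group,'' is the conclusion one wants, not an argument.

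A secondary issue: the uniform bound on finite subgroups is not established as written. The embedding $\mathrm{Isom}(M,g_H)\hookrightarrow U(m)$ with $m$ depending only on $\dim M$ fails because only the dimension of the isometry group is controlled by $\dim M$, not its number of components; as $H$ (and hence the averaged metric $g_H$) varies, the component group of $\mathrm{Isom}(M,g_H)$ is not a priori bounded, so Jordan's theorem cannot be invoked with a uniform constant. This part is peripheral to the main theorem, but it should not be presented as settled. I would encourage you to isolate and prove the two statements the paper actually rests on --- the free-action obstruction for $(\FF_p)^k$ and the germ-group statement that $G^r(d)$, $r>0$, contains no infinite bounded-exponent torsion group --- since without analogues of both, the outline cannot close.
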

We call a group a torsion group if every element has finite order. Let $H$ be a fixed group. The Burnside problem for subgroups of $H$ asks whether $H$ contains a finitely generated infinite torsion subgroup. For example, Burnside \cite{Burnside} proved the following well-known result.
\begin{thm}[Burnside's theorem]
If $G$ is a subgroup of $GL(n)$ such that every element has order at most $r$, then $G$ is a finite group.
\end{thm}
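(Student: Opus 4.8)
The plan is to work over $\mathbb{C}$, using the inclusion $GL(n,\mathbb{R}) \hookrightarrow GL(n,\mathbb{C})$, and to argue by induction on $n$, separating the reducible and irreducible cases. First I would dispose of the reducible case. Suppose $G \subseteq GL(n,\mathbb{C})$ preserves a proper nonzero subspace $V \subseteq \mathbb{C}^n$. Restriction and passage to the quotient give homomorphisms $G \to GL(V)$ and $G \to GL(\mathbb{C}^n/V)$ whose images again consist of elements of order at most $r$ but live in smaller general linear groups, hence are finite by induction. The kernel $K$ of the combined map $G \to GL(V) \times GL(\mathbb{C}^n/V)$ consists of block-upper-triangular matrices of the form $I + N$ with $N$ nilpotent; such an element is unipotent and of finite order, and in characteristic zero a unipotent matrix of finite order is the identity. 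Thus $K$ is trivial, $G$ embeds into a product of two finite groups, and $G$ is finite. This reduces the theorem to the case where $G$ acts irreducibly on $\mathbb{C}^n$.

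For the irreducible case, the key input is Burnside's theorem on matrix algebras: the linear span $A \subseteq M_n(\mathbb{C})$ of the group $G$ is a subalgebra acting irreducibly on $\mathbb{C}^n$, hence $A = M_n(\mathbb{C})$. Consequently I can choose elements $g_1, \dots, g_{n^2} \in G$ forming a $\mathbb{C}$-basis of $M_n(\mathbb{C})$.

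The finiteness then comes from traces. Since every $g \in G$ satisfies $g^k = I$ for some $k \le r$, its eigenvalues are roots of unity of order at most $r$, so $\operatorname{tr}(g)$ is a sum of $n$ such roots of unity. As there are only finitely many roots of unity of order at most $r$, the set $\{\operatorname{tr}(g) : g \in G\}$ is finite --- this is the one place where bounded exponent is essential. Now consider the map
\[
\Phi \colon G \to \mathbb{C}^{n^2}, \qquad \Phi(g) = \bigl(\operatorname{tr}(g g_1), \dots, \operatorname{tr}(g g_{n^2})\bigr).
\]
Each $g g_i$ lies in $G$, so each coordinate takes only finitely many values and $\Phi$ has finite image. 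Finally $\Phi$ is injective: if $\Phi(g) = \Phi(h)$ then $\operatorname{tr}\bigl((g-h) g_i\bigr) = 0$ for all $i$, and since the $g_i$ span $M_n(\mathbb{C})$ and the trace form $(X,Y) \mapsto \operatorname{tr}(XY)$ is nondegenerate, this forces $g = h$. Hence $G$ is finite.

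The main obstacle is establishing and correctly invoking Burnside's theorem on algebras in the irreducible case; the remaining steps --- the unipotent reduction in characteristic zero and the trace-pairing injectivity --- are routine once that structural result is in hand.
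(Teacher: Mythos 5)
The paper does not prove this statement at all: it is quoted as a classical result with a citation to Burnside, so there is no in-paper argument to compare against. Your proof is the standard (and correct) one --- reduce to the irreducible case by splitting off a unipotent kernel, invoke Burnside's theorem on irreducible matrix algebras to extract a basis $g_1,\dots,g_{n^2}$ of $M_n(\mathbb{C})$ from $G$, and then use the finiteness of the trace set (sums of $n$ roots of unity of order at most $r$) together with the nondegeneracy of the trace pairing to bound $|G|$. The only things left implicit are the trivial base case $n=1$ and the observation that a finite-order unipotent matrix in characteristic zero is the identity (it satisfies $x^k-1$, hence is diagonalizable with all eigenvalues $1$); both are routine, and the argument is complete.
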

Then Schur \cite{Schur} generalizes his result for finitely generated torsion groups without the assumption on the exponent. However, without the ambient group $H$, there exist many finitely generated infinite torsion groups. This is proved by  Golod--Schafarevich \cite{Golod} \cite{GS} and Adian-Novikov \cite{AN}. There is a vast literature on this subject by works of Olshanskii, Ivanov, Grigorchuk, among others.

In this paper, we work with $H=\Diff(M)$ for a compact manifold $M$. The Burnside problem for homeomorphism groups is asked by Ghys and Farb. In dimension one, such infinite torsion group cannot exist as a consequence of H\"{o}lder's theorem \cite[Theorem 2.2.32]{Navas}. Guelman--Lioussse \cite{GL} proved the non-existence of $G$ for homeomorphisms of a genus $g>1$ surface, Hurtado--Kocsard--Rodr\'iguez-Hertz \cite{HKR} for volume preserving diffeomorphisms of the $2$-sphere with the assumption that $G$ has bounded exponent, Rebelo--Silva \cite{RS} for symplectomorphisms on certain symplectic 4-manifolds and Conejeros \cite{Conejeros} for homeomorphisms of the $2$-sphere when $G$ is a $2$-group of bounded exponent.

The special property of a $2$-group different from other torsion groups is observed by Conejeros \cite{Conejeros}. The key property he uses about an infinite $2$-group $G$ is the existence of an involution $g\in G$ such that the centralizer $C(g)$ is still an infinite group. We make use of this key fact as well. 

In this paper, we generalize his result  to higher dimensional manifolds but we restrict our result on smooth actions. Our future goal if to generalize the same result on topological actions. We do not need $G$ to be finitely generated but only has bounded exponent. Also notice that without bounded exponent assumption, there exists an infinite $2$-group acting on the circle (the group consists of order $2^k$ rotations for all $k$). 

The germ group $G^r(d)$ is the group of equivalence classes of regularity $r$ diffeomorphisms of $\mathbb{R}^d$ fixing the origin, where $f\sim g$ if and only if there exists an open neighborhood $U$ such that $f|_U=g|_U$. The key observation of this paper is the following.
\begin{thm}\label{main2}
The smooth germ group $G^r(d)$ for $r>0$ does not contain an infinite torsion group of bounded exponent. 
\end{thm}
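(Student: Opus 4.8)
The plan is to prove this by studying the derivative action at the fixed point. Given a germ $f \in G^r(d)$ fixing the origin, differentiation gives a homomorphism $D\colon G^r(d) \to GL(d,\mathbb{R})$ sending $f \mapsto Df(0)$. If $G \subset G^r(d)$ is an infinite torsion group of bounded exponent, then its image $D(G) \subset GL(d,\mathbb{R})$ is also a torsion group of bounded exponent, so by Burnside's theorem $D(G)$ is finite. Hence the kernel $K = G \cap \ker D$, consisting of germs that are tangent to the identity at the origin, has finite index in $G$ and is therefore still infinite (and still of bounded exponent). So the problem reduces to showing that the group of germs tangent to the identity contains no infinite torsion group of bounded exponent.

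Having made this reduction, I would analyze a nontrivial germ $f$ tangent to the identity by its Taylor expansion. Write $f(x) = x + P_k(x) + O(|x|^{k+1})$, where $P_k$ is the lowest-order nonvanishing homogeneous term, of degree $k \geq 2$, in the expansion of $f - \mathrm{id}$. The key computation is to track how this leading term behaves under composition: if $g(x) = x + Q_\ell(x) + \cdots$ with lowest-degree term of degree $\ell$, then the leading term of $f \circ g$ is governed by $P_k$, $Q_\ell$ and their degrees. In particular, for the iterate $f^n$ one expects the leading term to scale like $n \cdot P_k$ — that is, the degree-$k$ part of $f^n - \mathrm{id}$ should be $n P_k(x) + (\text{lower-order-in-}n \text{ corrections})$. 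This is the standard "first jet that sees nontriviality is additive under iteration" phenomenon.

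The decisive step is then the torsion obstruction: if $f$ has finite order $2^m$, then $f^{2^m} = \mathrm{id}$ as a germ, so every homogeneous term of $f^{2^m} - \mathrm{id}$ must vanish. Applying the additivity of the leading jet, the degree-$k$ part of $f^{2^m} - \mathrm{id}$ equals $2^m P_k$, which forces $P_k = 0$ — a contradiction unless $f$ was already the identity germ to begin with. More carefully, one runs this argument inductively on the order of vanishing: having shown the germ agrees with the identity up to order $k$, the lowest genuinely nontrivial jet is additive under iteration in the appropriate quotient, and finite order kills it. The conclusion is that \emph{any} finite-order germ tangent to the identity is the trivial germ. Consequently $K$ is trivial, so $G \cong D(G)$ is finite, contradicting the assumption that $G$ is infinite and completing the proof.

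The main obstacle I anticipate is making the "additivity of the leading jet under iteration" rigorous at each stage and ensuring the inductive bookkeeping is clean. Composition of power series mixes jets in a nonlinear way, and the precise claim one needs is that in the appropriate filtration quotient — the abelian group of degree-$k$ jets modulo everything of higher order, with the group structure inherited from composition — the map $f \mapsto [\text{degree-}k\text{ part}]$ is a homomorphism to an additive group. The subtlety is that this filtration and its associated graded must be set up so that each successive leading term lands in a torsion-free $\mathbb{R}$-vector space, which is exactly what makes bounded-order elements trivial; handling the transition from one graded piece to the next, and verifying the required linearity by the chain rule, is where the real work lies.
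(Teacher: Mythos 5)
Your first paragraph is exactly the paper's own reduction: the paper uses the same derivative homomorphism $p\colon G^r(d)\to GL(d)$, applies Burnside's theorem to conclude $p(G)$ is finite, and is then left with the kernel of germs tangent to the identity. The gap is in your proof that this kernel contains no nontrivial torsion, which the paper does not prove by hand but imports as the Reeb stability theorem. Your jet argument has two genuine problems. First, the theorem is stated for $r>0$, i.e.\ for $C^1$ germs, which in general admit no Taylor expansion beyond first order, so the leading homogeneous term $P_k$ with $k\ge 2$ need not exist and the argument cannot start. Second, and more seriously, even in the $C^\infty$ case your induction on the order of vanishing never terminates: it shows only that a finite-order germ tangent to the identity has the same \emph{infinite jet} as the identity, i.e.\ that $f-\mathrm{id}$ is flat at the origin. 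It does not show that $f$ is the identity germ. Flat non-identity germs exist in abundance (e.g.\ $x\mapsto x+e^{-1/x^2}$ in dimension one), and nothing in the formal computation excludes the possibility that one of them has finite order. Since the set of flat germs is enormous, concluding ``$K$ is trivial'' from ``every element of $K$ is flat'' is a real jump, not bookkeeping.

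The missing idea is Bochner's averaging trick, which is the standard proof of the torsion-freeness in Reeb stability and works already in $C^1$: if $f^n=\mathrm{id}$ and $Df(0)=I$, set $h=\frac{1}{n}\sum_{i=0}^{n-1}f^i$ (a sum of maps into $\mathbb{R}^d$, so this makes sense on a small neighborhood of $0$). Then $Dh(0)=I$, so $h$ is a germ of $C^1$ diffeomorphism, while $h\circ f=\frac{1}{n}\sum_{i=1}^{n}f^i=h$ because $f^n=\mathrm{id}$; hence $f=h^{-1}\circ h\circ f=\mathrm{id}$. Replacing your jet induction by this one-line averaging argument (or simply citing Reeb stability, as the paper does) closes both gaps simultaneously and recovers the paper's proof.
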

This result is an easy application of Reeb stability and Burnside's theorem which only applies for smooth action. We wonder about the following question.
\begin{qu}\label{ques}
Does the topological germ group $G^0(d)$ contains an infinite torsion group of bounded exponent?
\end{qu}
We believe that Problem \ref{ques} is an important question in the study of germ group. Also by the work of this paper, the resolution of Problem \ref{ques} implies Theorem \ref{main} for topological actions. Conejeros \cite{Conejeros} proved that $G^0(2)$ contains no infinite $2$-group of bounded exponent. 

\para{Acknowledgement}
We thank Sebastian Hurtado and Shmuel Weinberger for their help explaining Theorem \ref{Shm}.

\section{The proof of main theorem}
In this section, we prove Theorem \ref{main}. We first prove Theorem \ref{main2} using Reeb stability and Burnside's theorem. Then we give a property of infinite $2$-groups. In the end, we prove Theorem \ref{main} dividing in two cases, one where the action is free and the other when the action has fixed points. For free action, we use a result of Weinberger \cite{Shmuel}.
\subsection{Proof of Theorem \ref{main2}}
We have the following Reeb Stability Theorem \cite{Reeb}. Recall that $G^r(d)$ denotes the group of equivalence class of regularity $r$ diffeomorphisms of $\mathbb{R}^d$ fixing the origin, where $f\sim g$ if and only if there exists an open neighborhood $U$ such that $f|_U=g|_U$. There is a natural projection $p: G^r(d)\to GL(d)$  recording  the derivative of a diffeomorphism at the orgin for $r>0$. 
\begin{thm}[Reeb Stability]
The kernel of the natural projection $p: G^r(d)\to GL(d)$ is torsion-free.
\end{thm}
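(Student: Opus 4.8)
The plan is to run the classical averaging (Bochner linearization) argument, specialized to the case where the linear part is trivial. First I would unwind the definitions: the kernel of $p$ consists precisely of germs of $C^r$ diffeomorphisms $f$ of $\mathbb{R}^d$ with $f(0)=0$ and $Df(0)=\mathrm{Id}$. So the task is to show that if such a germ $f$ satisfies $f^n=\mathrm{id}$ as germs for some $n\geq 1$, then $f$ is the identity germ. Since only finitely many iterates are involved, I can fix a single open neighborhood $U$ of the origin on which $f,f^2,\dots,f^n$ are all defined and on which the relation $f^n=\mathrm{id}$ holds literally.

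The heart of the argument is to build an explicit conjugating germ by averaging the iterates of $f$. I would define
\[
h=\frac{1}{n}\sum_{k=0}^{n-1} f^k,
\]
a convex combination that makes sense because the iterates are $C^r$ maps into the vector space $\mathbb{R}^d$. Then $h$ is a $C^r$ germ fixing the origin, and by the chain rule together with $f(0)=0$ one computes $Df^k(0)=(Df(0))^k=\mathrm{Id}$, whence $Dh(0)=\mathrm{Id}$. By the inverse function theorem, which requires $r\geq 1$ (exactly the hypothesis), $h$ is a germ of $C^r$ diffeomorphism.

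Finally I would verify the conjugation identity $h\circ f=h$. Indeed,
\[
h\circ f=\frac{1}{n}\sum_{k=0}^{n-1} f^{k+1}=\frac{1}{n}\sum_{j=1}^{n} f^{j}=h,
\]
where the last equality uses $f^n=\mathrm{id}$ to replace the $j=n$ summand by the missing $j=0$ term. Since $h$ is invertible as a germ, I can cancel it on the left to conclude $f=\mathrm{id}$, as desired.

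The argument is short, and I do not anticipate a serious obstacle; the one point requiring care is the passage between honest maps and germs. I must ensure that all iterates and the averaged map are simultaneously defined on a common neighborhood, and that the relation $f^n=\mathrm{id}$ exploited in the averaging holds there rather than merely on a smaller neighborhood depending on $k$. As only finitely many iterates enter, intersecting the finitely many domains settles this. The hypothesis $r\geq 1$ is essential: it is what lets me differentiate at the origin and invoke the inverse function theorem, and it is precisely the smoothness that is unavailable in the topological setting raised in Problem \ref{ques}.
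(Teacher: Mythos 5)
Your proof is correct. The paper does not actually prove this statement---it quotes it as Reeb's theorem with a citation---but your argument is the standard Bochner averaging proof of exactly this fact: the conjugating map $h=\frac{1}{n}\sum_{k=0}^{n-1}f^k$ has derivative $\mathrm{Id}$ at the origin (hence is a local $C^r$ diffeomorphism by the inverse function theorem), satisfies $h\circ f=h$ by reindexing the sum using $f^n=\mathrm{id}$, and left-cancellation of the invertible germ $h$ gives $f=\mathrm{id}$. Your attention to the domain issue (choosing one neighborhood on which all $n$ iterates are defined and the relation $f^n=\mathrm{id}$ holds literally) and to the role of the hypothesis $r\geq 1$ is exactly where the care is needed, so the write-up is complete as it stands.
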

Reeb Stability is generalized by Thurston \cite{Thurston} saying that the kernel of $p$ is locally indicable. In this paper, we only use the weaker form that Ker$(p)$ contains no torsions. We now prove that $G^r(d)$ contains no infinite torsion group of bounded exponent.
\begin{proof}[Proof of Theorem \ref{main2}]
If $G^r(d)$ contains an infinite torsion group $G$ of bounded exponent, then either $p(G)$ is infinite or $G\cap \text{Ker}(p)$ is infinite. However, $\text{Ker}(p)$ is torsion free, which implies that $p(G)$ is an infinite torsion group of bounded exponent. This contradicts Burnside's theorem.
\end{proof}

\subsection{A property of infinite $2$-groups}
Let $G$ be an infinite $2$-group. We now state a  property of $2$-groups. For $g\in G$, denote by $C(g)$ the centralizer of $g$. Denote by $\Inv(G)$ the set of order $2$ elements (involution) in $G$.
\begin{prop}\label{property}
There exists an element $g\in \Inv(G)$ such that $C(g)$ is also infinite. 
\end{prop}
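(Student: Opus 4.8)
The plan is to argue by contradiction: assume that \emph{every} involution in $G$ has finite centralizer, and derive a contradiction from the dihedral structure generated by pairs of involutions. This is where the $2$-group hypothesis (as opposed to general torsion) is genuinely used.

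First I would record the elementary structural facts. Since $G$ is an infinite $2$-group it is nontrivial, and any nonidentity element $g$ of order $2^m$ yields the involution $g^{2^{m-1}}$, so $\Inv(G)\neq\emptyset$. The key computation concerns two involutions $a,b\in\Inv(G)$ with $a\neq b$: the element $x=ab$ is nontrivial, and because $G$ is a $2$-group its order is $n=2^k$ with $k\geq 1$. A direct check gives $axa^{-1}=x^{-1}$ and $bxb^{-1}=x^{-1}$, so $\langle a,b\rangle$ is dihedral of order $2n$. Since $n$ is even, $z:=x^{n/2}=(ab)^{2^{k-1}}$ has order exactly $2$, hence is a genuine involution with $z\neq 1$; and from $axa^{-1}=x^{-1}$ one gets $aza^{-1}=z^{-1}=z$, and similarly $bzb^{-1}=z$. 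Thus $z$ is an involution commuting with both $a$ and $b$. The $2$-group hypothesis is exactly what forces $n$ to be even, so that this central involution exists (for odd-order $x$ the dihedral group has trivial center and the argument collapses).

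Now for the main argument, suppose toward a contradiction that $C(t)$ is finite for every $t\in\Inv(G)$, and pick any $a\in\Inv(G)$. Since $C(a)$ is finite while $G$ is infinite, the conjugacy class $[G:C(a)]$ is infinite, so there are infinitely many distinct conjugates $a=a_1,a_2,\dots$, each of them an involution. For every $i\geq 2$ I apply the computation above to the pair $a,a_i$ to produce an involution $z_i\in C(a)\cap C(a_i)$. Because $C(a)$ is finite it contains only finitely many involutions, so by pigeonhole some fixed involution $z\in C(a)$ satisfies $z=z_i$ for all $i$ in an infinite index set $I$.

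Finally, for each $i\in I$ the element $z=z_i$ commutes with $a_i$, whence $\{a_i:i\in I\}\subseteq C(z)$; as the $a_i$ are pairwise distinct, $C(z)$ is infinite. But $z$ is an involution, contradicting the standing assumption that every involution has finite centralizer. Therefore some involution has infinite centralizer, which is the assertion of the proposition. The step requiring the most care is the dihedral computation, in particular verifying $z\neq 1$ (which holds since $x$ has order $\geq 2$); the pigeonhole and the passage to $C(z)$ are then routine.
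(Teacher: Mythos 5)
Your proof is correct and rests on exactly the same two ingredients as the paper's: the central involution $z=(ab)^{2^{k-1}}$ of the dihedral group generated by two distinct involutions (where the $2$-group hypothesis enters), followed by a pigeonhole argument over a finite centralizer. The only difference is organizational — you argue by contradiction and take the infinite family of involutions to be the conjugacy class of a single $a$, which neatly merges the paper's two cases ($\Inv(G)$ finite versus infinite) into one; the substance is the same.
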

\begin{proof}We follow the argument of Conejeros \cite{Conejeros}.

If $\Inv(G)$ is finite, then $C(g)$ is infinite for any $g\in \Inv(G)$. Assume that $\Inv(G)$ is infinite, we have an infinite sequence $\{x_n\}_{n=1}^\infty\subset Inv(G)$. Since $x_1,x_n$ either generate a dihedral group or a cyclic group, there exists an element $v_n\in \langle x_1,x_n\rangle$ commuting with $x_1,x_n$. 

If the set $\{v_n\}_{n=1}^\infty\subset G$ is infinite, then we obtain an element $g=x_1$ satisfying $C(g)$ is finite becuase it contains infinitely many different elements. If  $\{v_n\}_{n=1}^\infty\subset G$ is finite, then let $g\in \{v_n\}_{n=1}^\infty$ such that $g=v_n$ for infinitely many $n$. We know that $g$ commutes $x_n$ if $g=v_n$. Then $C(g)$ contains infinitely many different elements $x_n$ for any $n$ such that $g=v_n$.
\end{proof}

\subsection{The proof of Theorem \ref{main}}
In this section, we prove Theorem \ref{main}. Before that, we include an important ingredient. Denote by $\FF_p$ the finite field of order $p$. 
\begin{thm}[Weinberger's theorem]\label{Shm}
For a compact manifold $M$, there exist a natural number $k$ such that $(\FF_p)^k$ cannot act freely on $M$.
\end{thm}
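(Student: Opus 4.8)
The plan is to pass to the Borel construction and read off the constraint imposed by $\dim M$ through a spectral sequence of modules over $H^*(BG;\FF_p)$. Write $G=(\FF_p)^k$ and suppose $G$ acts freely (and, as in our setting, smoothly) on $M$. Since the action is free and $G$ is finite, the quotient $M/G$ is again a compact manifold with $\dim(M/G)=\dim M=:n$, so $H^j(M/G;\FF_p)=0$ for all $j>n$. The universal bundle gives the Borel fibration $M\hookrightarrow M\times_G EG\to BG$, and freeness yields a homotopy equivalence $M\times_G EG\simeq M/G$. In particular the equivariant cohomology $H^*_G(M;\FF_p):=H^*(M\times_G EG;\FF_p)$ is isomorphic to $H^*(M/G;\FF_p)$, hence is finite dimensional over $\FF_p$ and vanishes above degree $n$.

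First I would set up the Serre spectral sequence of this fibration,
\[
E_2^{s,t}=H^s\!\big(BG;\,\underline{H^t(M;\FF_p)}\big)\ \Longrightarrow\ H^{s+t}(M/G;\FF_p),
\]
and regard it as a spectral sequence of modules over the graded ring $R:=H^*(BG;\FF_p)=H^*((\FF_p)^k;\FF_p)$. The ring $R$ is a finitely generated $\FF_p$-algebra of Krull dimension exactly $k$ (a polynomial algebra on $k$ degree-one generators when $p=2$, and a polynomial-times-exterior algebra with $k$ polynomial generators when $p$ is odd). Because $M$ is a compact manifold, $H^*(M;\FF_p)$ is finite dimensional, so the spectral sequence is concentrated in the rows $0\le t\le n$; consequently only $d_2,\dots,d_{n+1}$ can be nonzero and $E_{n+2}=E_\infty$. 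After replacing $G$ by the kernel of the finite representation $G\to GL\big(H^*(M;\FF_p)\big)$, which is again elementary abelian of rank $k'\ge k-c$ for a constant $c$ depending only on $M$, I may assume the action on $H^*(M;\FF_p)$ is trivial, so that $E_2\cong R\otimes_{\FF_p}H^*(M;\FF_p)$ is a \emph{free} $R$-module of rank $N:=\dim_{\FF_p}H^*(M;\FF_p)>0$, and in particular has Krull dimension $k'$ as an $R$-module. This change of $k$ by a bounded additive constant is harmless for the final statement.

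Next I would track Krull dimension across the pages. Each page $E_{r+1}$ is the homology of the $R$-linear differential $d_r$ on $E_r$, and the abutment is finite dimensional, so $E_\infty$ has Krull dimension $0$ as an $R$-module. The heart of the argument is a commutative-algebra estimate bounding, in terms of $n$ and $N$, how far the Krull dimension can fall as one passes from the free module $E_2$ of dimension $k'$ to the dimension-zero module $E_\infty$ through the finitely many pages $d_2,\dots,d_{n+1}$. Since the differentials are supported in the $n+1$ rows $0\le t\le n$, this produces an explicit bound $k'\le f(n,N)$, hence $k\le f(n,N)+c=:K(M)$; any $k>K(M)$ then gives the desired value for which no free action of $(\FF_p)^k$ on $M$ exists.

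The hard part is precisely this dimension descent. A single differential can lower the Krull dimension by more than one — as the Koszul complex on a regular sequence already shows — so one cannot simply subtract one per page; the estimate must use both that there are at most $n$ nonzero differentials and that each acts on a rank-$N$ module built from $H^*(M;\FF_p)$. It must also accommodate the odd-primary case, where the Krull dimension of $R$ is carried by the degree-two Bocksteins $\beta x_i$ rather than by degree-one classes, so the low-degree ``permanent cycle'' shortcut available when $p=2$ (whereby $H^1(BG;\FF_2)$ injects into $H^1(M/G;\FF_2)$) does not directly apply. I would note that the sharp form of this phenomenon — that a free action forces $\dim_{\FF_p}H^*(M;\FF_p)\ge p^{k}$ — is Carlsson's conjecture and remains open in general; but only the existence of \emph{some} bound $K(M)$ is needed here, and that already follows from the finite generation of $H^*_G(M;\FF_p)$ over the Noetherian ring $R$ together with the row-count dimension estimate above.
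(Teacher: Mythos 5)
Your setup coincides with the paper's: both pass to the quotient/Borel spectral sequence $E_2^{s,t}=H^s(G;H^t(M;\FF_p))\Rightarrow H^{s+t}(M/G;\FF_p)$, both first shrink $G$ to the kernel of $G\to GL(H^*(M;\FF_p))$ (losing only a bounded amount of rank) so that $E_2\cong H^*(BG)\otimes H^*(M)$, and both exploit that the abutment vanishes above $n=\dim M$. But there is a genuine gap in your finishing move. The entire content of the theorem has been pushed into the ``commutative-algebra estimate'' that Krull dimension can drop only by a bounded amount $f(n,N)$ from the free module $E_2$ to the dimension-zero module $E_\infty$ across the pages $d_2,\dots,d_{n+1}$. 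You correctly flag this as ``the hard part'' and correctly observe that a single $R$-linear differential can collapse Krull dimension by much more than one (Koszul), but you never prove the estimate; instead you assert that it ``already follows from the finite generation of $H^*_G(M;\FF_p)$ over the Noetherian ring $R$ together with the row-count dimension estimate above.'' That is circular: the ``row-count dimension estimate'' is precisely the unproven claim, and finite generation plus finite dimensionality of $H^*_G(M)$ only says this module has Krull dimension $0$ over $R$ -- which is automatic for any free action and yields no bound on $k$ at all. No off-the-shelf lemma of the form you invoke is available; proving it in this generality is essentially equivalent to the theorem itself (it sits in the circle of ideas around the rank/Carlsson conjectures, as you note).

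The paper avoids this by an elementary count that you should substitute for the Krull-dimension bookkeeping. Fix $2i>n$ and look at the corner entry $E^{2i,0}\cong H^{2i}(G)\otimes H^0(M)$. Its dimension $d_{k,i}=\dim H^{2i}(G)$ grows in $k$ like a polynomial of degree $i$ (count monomials in the $k$ polynomial generators; the exterior classes at odd $p$ only change this by bounded factors). Every differential, on every page, that can hit this entry originates in some $E^{2i-r,\,r-1}$ with $r\ge 2$, and the total dimension of all such sources is at most $N\sum_{j<i}d_{k,j}$, which grows like a polynomial of degree $i-1$ in $k$. Hence for $k$ large the entry $E^{2i,0}$ cannot be killed, so $H^{2i}(M/G;\FF_p)\neq 0$ with $2i>n$, contradicting that $M/G$ is a compact $n$-manifold. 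This replaces your unproved descent lemma with a two-line asymptotic comparison and completes the argument.
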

We will include a proof of this theorem in the appendix. We now start the proof of Theorem \ref{main}.
\begin{proof}[Proof of Theorem \ref{main}]We break the proof into two cases: free or not free.

\para{Case 1: free action}
We prove by an induction on the exponent of $G$ that an infinite $2$-group of bounded exponent cannot act freely on a compact manifold.  Assume that if the exponent of $G$ is less than $2^N$, then $G$ cannot act freely on a compact manifold. The case for $N=1$ is given by Theorem \ref{Shm}. Let $\rho: G\to \Diff(M)$ be a free action.

For a group $H$, let $H^2$ be the group generated by squares of $H$. The following short exact sequence is well-known:
\[
1\to H^2\to H\to H_1(H;\mathbb{Z}/2)\to 1.
\]
Now define $G_1=G$ and inductively $G_n=G_{n-1}^2$. By the above exact sequence, we know that $G_n/G_{n-1}$ is an abelian $2$-group. 

Assume that the highest order of $G$ is $2^N$. Therefore $G_N=\{1\}$, which implies that the group $G_{N-1}$ is an abelian $2$-group. By Theorem \ref{Shm}, since $\rho$ is injective, we know that $G_{N-1}$ is a finite group. Therefore we obtain a new manifold $M/G_{N-1}$, which carries a new free action $G/G_{N-1}$. However, the exponent of $G/G_{N-1}$ is less that $2^N$. The inductive assumption implies that $G/G_{N-1}$ is finite. Therefore $G$ is finite.

\para{Case 2: general case}
We prove this case by an induction on the dimension of $M$. The result is true for $\dim(M)=1$ by H\"{o}lder's theorem \cite[Theorem 2.2.32]{Navas}. Assume that for $M$ such that $\dim(M)<n$, the group $\Diff(M)$ contains no finite $2$-groups of bounded exponent. 

Let $\rho: G\to \Diff(M)$ be a faithful action for $G$ an infinite $2$-group of bounded exponent. For $g$ such that $C(g)$ is infinite, we have the following claim. This claim is a consequence of the inductive assumption.
\begin{claim}\label{fixedpoint}
$\rho(g)$ is fixed point free. 
\end{claim}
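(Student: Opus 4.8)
The plan is to argue by contradiction: assume $\rho(g)$ has a fixed point and produce an infinite $2$-group of bounded exponent that is forced to be trivial. Recall that $g$ is an involution with $C(g)$ infinite. We may assume $M$ is connected: an infinite $2$-group of bounded exponent in $\Diff(M)$ yields one in $\Diff(M_i)$ for some connected component $M_i$, so the inductive setup reduces to the connected case. Suppose $F := \mathrm{Fix}(\rho(g)) \neq \emptyset$. Since $\rho(g)$ has order $2$, averaging a Riemannian metric over $\langle \rho(g)\rangle$ makes $\rho(g)$ an isometry, so $F$ is a closed (totally geodesic) submanifold; as $\rho(g) \neq \mathrm{id}$ and $M$ is connected, $F$ has empty interior, and hence every component of $F$ has dimension $< n$.

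The first step is to peel off the tangential action. Because every $h \in C(g)$ commutes with $g$, $\rho(h)$ preserves $F$, giving a restriction homomorphism $\bar\rho : C(g) \to \Diff(F)$. Its image is a $2$-group of bounded exponent inside $\Diff(F)$, and applying the inductive hypothesis to each dimensional stratum of $F$ (all of dimension $< n$) forces $\bar\rho(C(g))$ to be finite. Hence $K := \ker\bar\rho$ has finite index in the infinite group $C(g)$, so $K$ is an infinite $2$-group of bounded exponent, and every element of $K$ fixes $F$ pointwise.

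The second step removes the normal linear data. Fix $x \in F$; since $\rho(h)$ fixes $F$ pointwise, $d\rho(h)_x$ is the identity on $T_xF$ and therefore descends to the normal space $\nu_x = T_xM/T_xF$. This yields a homomorphism $\beta : K \to GL(\nu_x)$, whose image has bounded exponent, so Burnside's theorem makes $\beta(K)$ finite. Thus $K_0 := \ker\beta$ is again an infinite $2$-group of bounded exponent, and for every $h \in K_0$ the full derivative $d\rho(h)_x$ is the identity on all of $T_xM$.

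The final step is the rigidity that converts a trivial $1$-jet into global triviality. Given $h \in K_0$, average a metric over the finite cyclic group $\langle \rho(h)\rangle$ so that $\rho(h)$ is an isometry fixing $x$ with $d\rho(h)_x = \mathrm{id}$; an isometry of a connected manifold is determined by its $1$-jet at a point, so $\rho(h) = \mathrm{id}$, and faithfulness of $\rho$ gives $h = 1$. Then $K_0 = \{1\}$, contradicting its infinitude, so $\rho(g)$ must be fixed point free. I expect the main obstacle to be exactly this last step: the inductive hypothesis and Burnside only constrain the linear behavior of the action at $x$, and an element with trivial $1$-jet could a priori be a nontrivial diffeomorphism supported away from $x$ (this is precisely the phenomenon that Reeb stability governs at the germ level). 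The finite order of the elements is what rescues us through invariant-metric rigidity; equivalently one may phrase this via Reeb stability, observing that the germ of $h$ lies in the torsion-free kernel of $p : G^r(n) \to GL(n)$ and is therefore trivial, before upgrading local to global triviality on the connected component.
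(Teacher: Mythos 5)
Your proof is correct in outline and shares the paper's skeleton --- restrict $C(g)$ to the fixed set $F$, kill the restricted action by the dimension induction, then linearize what remains at a point $x\in F$ --- but it implements the final step differently. The paper sends $\ker\bar\rho$ into the germ group $G^\infty(n)$ at $x$, invokes Theorem \ref{main2} (Reeb stability plus Burnside) to make the image finite, and then kills the kernel of the germ map by observing that a nontrivial finite-order diffeomorphism cannot fix an open set. You instead apply Burnside directly to the derivative representation at $x$ and finish with invariant-metric (Bochner) rigidity: a finite-order diffeomorphism fixing $x$ with trivial $1$-jet is an isometry for an averaged metric and hence the identity on the connected component. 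This is essentially an unpacking of the proof of Reeb stability specialized to finite-order elements, made global rather than germ-level; it buys you independence from the germ-group formalism and from the ``fixed-point sets of nontrivial finite actions have no interior'' lemma, at the cost of not reusing Theorem \ref{main2} as a black box. You also handle two points the paper elides (connectedness of $M$, and the stratification of $F$ by dimension), which is to your credit.

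One small gap to patch in your second step: knowing that $d\rho(h)_x$ is the identity on $T_xF$ and induces the identity on $\nu_x = T_xM/T_xF$ only makes $d\rho(h)_x$ \emph{unipotent} (block upper triangular with identity diagonal blocks), not yet the identity on $T_xM$. You need to add that $d\rho(h)_x$ has finite order, and a finite-order unipotent matrix over $\mathbb{R}$ is the identity; alternatively, skip the tangential/normal splitting entirely and apply Burnside to the full representation $K \to GL(T_xM)$, whose kernel is then exactly the set of elements with trivial $1$-jet at $x$. With that adjustment the argument is complete.
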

\begin{proof}
If $\rho(g)$ has fixed points, then the fixed point set $F$ is a finite union of sub-manifolds of $M$ of dimension less than $n$. Since $C(g)$ acts on $F$, there is a finite index subgroup $G'$ of $C(g)$ that preserves each component of $F$. Therefore $G'$ is still an infinite $2$-group of bounded exponent. Let $\rho':G'\to \Diff(F)$ be the new action. By the inductive assumption, we know that $\rho'$ has finite image. Therefore $\text{Ker}(\rho')$ is an infinite $2$-group of bounded exponent. 

We have the natural projection $L: \text{Ker}(\rho')\to G^\infty(n)$ by considering the action near a point $x\in F$. By Theorem \ref{main2}, the image of $L$ is a finite group. Since an element in the kernel of $L$ fixes an open set near $x$ but the fixed point set of a nontrivial finite action has no interior, we know that the kernel of $L$ is torsion-free. This contradicts the fact that $\text{Ker}(\rho')$ is an infinite $2$-group of bounded exponent. 
\end{proof}

We now inductively obtain the following sequences of groups and elements. Let $G_1=G$ and $g_1\in \Inv(G_1)$ be such that $C(g_1)$ is infinite. By Proposition \ref{property}, such $g_1$ exists.  By Claim \ref{fixedpoint}, the action $\rho(g_1)$ is free. Denote by $\rho_1=\rho$. We obtain a new manifold $M_2=M_1/\rho_1(g_1)$ and a new faithful action 
\[
\rho_2:G_2:=C(g_1)/g_1\to \Diff(M_2).\]
Inductively, let $g_n\in G_n$ be such that $C(g_n)\subset G_n$ is infinite. For the same reason, the action $\rho_{n}(g_n)$ on $M_n$ is free. Define $M_{n+1}=M_{n}/\rho_n(g_n)$ and we obtain a new faithful action
\[
\rho_n:G_{n+1}:=G_n/g_n\to \Diff(M_{n+1}).\]
Denote by $p_n: C(g_n)\to C(g_n)/g_n=G_{n+1}$. Let $\widetilde{g_n}\in G$ be an element satisfying the following: 
\begin{itemize}
\item $\widetilde{g_n}\in C(g_1)$, 
\item $p_1(\widetilde{g_n})\in C(g_2)$, 
\item ...
\item $p_{n-2}...p_{1}(\widetilde{g_n})\in C(g_{n-1})$,
\item $p_{n-1}...p_1(\widetilde{g_n})=g_n \in G_n$.
\end{itemize}
Let $K<G$ be the group generated by $\{\widetilde{g_k}\}_{k>0}$ for all $k>0$. 
\begin{claim}
The group $K$ does not depend on the choices of elements $\{\widetilde{g_k}\}_{k>0}$.
\end{claim}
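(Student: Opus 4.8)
The plan is to show that each initial segment $\langle \widetilde{g_1},\dots,\widetilde{g_n}\rangle$ is in fact equal to a subgroup of $G$ that is determined entirely by the already-fixed data $g_1,\dots,g_n$ and $p_1,\dots,p_n$, so that no choice of the lifts $\widetilde{g_k}$ can affect it. Write $q_n=p_n\circ\cdots\circ p_1$ for the composite projection, defined on the subgroup $D_n\subseteq G$ cut out by the centralizer conditions in the bulleted list; thus the conditions defining $\widetilde{g_n}$ say precisely that $\widetilde{g_n}\in D_{n-1}$ and $q_{n-1}(\widetilde{g_n})=g_n$. The key structural fact is that $\ker(p_n)=\langle g_n\rangle$ is the central subgroup of order two of $C(g_n)$ that we quotient by, so each $p_n$ collapses exactly one extra involution.

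First I would record two elementary facts. Every valid lift $\widetilde{g_n}$ lies in $\ker(q_n)$, since $q_{n-1}(\widetilde{g_n})=g_n$ and $p_n(g_n)=1$, hence $q_n(\widetilde{g_n})=1$; more generally, chasing the centralizer conditions shows that for each $j\le n$ the lift $\widetilde{g_j}$ lies in the domain $D_n$ and satisfies $q_n(\widetilde{g_j})=1$, so $\widetilde{g_j}\in\ker(q_n)$. I would also note the nesting $\ker(q_{n-1})\subseteq\ker(q_n)$, which follows from $1\in\langle g_n\rangle=\ker(p_n)$.

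The heart of the argument is an induction proving that, for \emph{any} admissible choice of lifts,
\[
\langle \widetilde{g_1},\dots,\widetilde{g_n}\rangle=\ker(q_n).
\]
For $n=1$ this is immediate, since $\widetilde{g_1}=g_1$ and $\ker(q_1)=\ker(p_1)=\langle g_1\rangle$. For the inductive step I would use the short exact sequence
\[
1\longrightarrow \ker(q_{n-1})\longrightarrow \ker(q_n)\xrightarrow{\ q_{n-1}\ }\langle g_n\rangle\longrightarrow 1,
\]
whose surjectivity holds because $\widetilde{g_n}\in\ker(q_n)$ maps onto the generator $g_n$. Combining this with the inductive hypothesis $\langle \widetilde{g_1},\dots,\widetilde{g_{n-1}}\rangle=\ker(q_{n-1})$ yields $\langle \widetilde{g_1},\dots,\widetilde{g_n}\rangle=\langle \ker(q_{n-1}),\widetilde{g_n}\rangle=\ker(q_n)$. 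Crucially, this uses nothing about $\widetilde{g_n}$ beyond its being a valid lift, so it holds for every admissible sequence.

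Finally, since $g_1,\dots,g_n$ and $p_1,\dots,p_n$ are fixed once and for all, each $\ker(q_n)$ is a subgroup independent of the lifts. As the groups $\langle \widetilde{g_1},\dots,\widetilde{g_n}\rangle=\ker(q_n)$ increase with $n$, the group $K$ generated by all the $\widetilde{g_k}$ is their union $\bigcup_{n\ge 1}\ker(q_n)$, which is independent of the choices. The one point demanding care — and where I expect to spend the most effort — is the bookkeeping in the second paragraph: checking that all earlier lifts $\widetilde{g_j}$ genuinely lie in the domain $D_n$ and kernel of $q_n$, so that the exact sequence and the identification of initial segments with $\ker(q_n)$ are legitimate.
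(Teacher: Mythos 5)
Your proof is correct, and it reaches the conclusion by a genuinely different (and arguably sharper) route than the paper. The paper also inducts on $n$, but in the opposite direction: it pushes the tail $\{\widetilde{g_k}\}_{k=2}^{n}$ forward to $G_2$ via $p_1$, applies the inductive hypothesis there, and then notes that any two lifts back to $G_1$ differ by powers of $g_1$, which already lies in $K_n$; so only well-definedness is established. You instead identify each partial group outright, proving $\langle\widetilde{g_1},\dots,\widetilde{g_n}\rangle=\ker(q_n)$ by extending at the top through the exact sequence $1\to\ker(q_{n-1})\to\ker(q_n)\to\langle g_n\rangle\to 1$. Both arguments ultimately rest on the same mechanism --- lifts of a fixed element differ by kernel elements that are already in the group --- but yours buys a canonical description $K=\bigcup_{n\ge 1}\ker(q_n)$ rather than mere independence of choices, which also makes the freeness of $\rho(K)$ in the next step of the paper more transparent. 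The bookkeeping you flag does go through: $q_j(\widetilde{g_j})=p_j(g_j)=1$, and $1$ lies in every subsequent centralizer, so $\widetilde{g_j}\in D_n$ and $q_n(\widetilde{g_j})=1$ for all $n\ge j$ propagates trivially; likewise the coset decomposition of $\ker(q_n)$ over $\ker(q_{n-1})$ needs only that $\widetilde{g_n}$ maps onto the generator of the cyclic group $\langle g_n\rangle$, not that $g_n$ is an involution, so your argument is robust on a point where the paper is slightly vague for $n\ge 2$.
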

\begin{proof}
Let $K_n$ be the group generated by $\{\widetilde{g_k}\}_{k=1}^n$. We claim that $K_n$ does not depend on the choices of $\{\widetilde{g_k}\}_{k>0}$. We prove this by an induction on $n$. Firstly $K_1$ the the group generated by $g_1$, which does not depend on the choices of $\{\widetilde{g_k}\}_{k>0}$. Assume that $K_{n-1}$ does not depend on the choices of $\{\widetilde{g_k}\}_{k>0}$. Since $\{p_1(\widetilde{g_n})\}_{k=2}^n\subset G_2$ satisfies the same condition as $\{\widetilde{g_k}\}_{k=1}^{n-1}$ in $G_1$. The inductive assumption shows that the group generated by $\{p_1(\widetilde{g_k})\}_{k=2}^n\subset G_2$ does not depend on the choices of $\{p_1(\widetilde{g_k})\}_{k=2}^n\subset G_2$. Notice that elements $\{\widetilde{g_k}\}_{k=2}^n\subset G_1$ are lifts of $\{p_1(\widetilde{g_k})\}_{k=2}^n\subset G_2=C(g_1)/g_1$. Different choices of $\{\widetilde{g_k}\}_{k=2}^n\subset G_1$ are differed by multiplying powers of $g_1$. Therefore the group generated by $\{\widetilde{g_k}\}_{k=1}^n\subset G_1$ does not depend on the choices of $\{\widetilde{g_k}\}_{k=1}^n\subset G_1$.
\end{proof}
At each stage, the action of $\rho_n(g_n)$ on $M_n$ is free. Therefore the action $\rho(K)$ on $M$ is also free. Now, we find an infinite 2-group $K$ of bounded exponent acting freely on $M$. By Case 1, this is not possible.\end{proof}

\section{Appendix}
We now include a proof of Theorem \ref{Shm}.
\begin{proof}[Proof of Theorem \ref{Shm}]
The following argument is based on \cite[Proposition 1]{Shmuel}. The basic idea is that the cohomology of the quotient space is too big. Throughout the whole computation, we use the cohomology with $\FF_p$ coefficient. 

Assume there is a free action of $(\FF_p)^k$ on $M$ for any $k$. Since $M$ is compact, $H^*(M)$ is finite dimensional. Therefore there is a free action of $(\FF_p)^k$ on $M$ for any $k$ such that the action of $(\FF_p)^k$ on $H^*(M)$ is trivial. This is because there is an upper bound $k$ such that $(\FF_p)^k$ is a subgroup of the automorphism group $\text{Aut}(H^*(M))$.

Let $G=(\FF_p)^k$ be a group acting freely on $M$. We have the following spectral sequence \cite[Theorem 7.9]{Brown}:
\[
E_2^{pq}=H^p(G;H^q(M))\Longrightarrow H^{p+q}(M/G)
\]
Since the action of $G$ on $H^*(M)$ is trivial, we have that 
\[
E_2^{pq}=H^p(G)\otimes H^q(M).
\]
The cohomology ring of $\FF_p$ is 
\[
H^*(\FF_p)\cong \FF_p [x]
\] a polynomial ring of one variable such that the degree of $x$ is $2$. Therefore the cohomology of $G$ is 
\[
H^*(G)\cong \FF_p [x_1,...,x_k]\]
such that deg$(x_i)=2$. We compute the dimension:
\[
d_{k,i}:=\dim(H^{2i}(G))={k+i-1 \choose i-1 }
\]
The dimension $\dim(H^{2i}(G))$ is a degree $i$ polynomial of $k$.

Let $n$ be the top dimension of $H^*(M)$. Let $2i>n$, all the differentials in the spectral sequence $E_2^{pq}$ that targets at $H^{2i}(G)\otimes H^0(M)$ are 
\begin{itemize}
\item $H^{2i-2}(G)\otimes H^1(M)$ (of dimension less than $Nd_{k,i-1}$),
\item $H^{2i-4}(G)\otimes H^3(M)$ (of dimension less than $Nd_{k,i-2}$), 
\item ...
\end{itemize}
The sum of dimensions of all of the above terms are bounded by $N(\sum_{j<i} d_{k,j})$. When $k$ is big enough, $N(\sum_{j<i} d_{k,j})$ is less than $d_{k,i}$ since the degree of the polynomial $d_{k,i}$ is larger than $d_{k,j}$ for $j<i$. Therefore, $H^{2i}(G)\otimes H^0(M)$ cannot be fully killed by differentials. This contradicts the fact that $M/G$ is also an $n$-dimensional manifold.

\end{proof}

\bibliographystyle{alpha}
\bibliography{biblio}

\end{document}